\documentclass[review]{elsarticle}
\usepackage{amsmath}
\usepackage{array}
\usepackage{amsthm}
\usepackage{amssymb}
\usepackage{amsfonts}
\usepackage{mathtools}
\usepackage{bm}
\usepackage{float,latexsym,moreverb,color,amscd,graphicx,amsmath,epstopdf,epsfig,mathptmx}
\usepackage{amsthm,amssymb,mathrsfs,mathtools,amsmath,dsfont}
\usepackage[colorlinks,bookmarksopen,bookmarksnumbered,citecolor=red,urlcolor=red]{hyperref}
\usepackage{lineno}
\usepackage{subfigure}

\newtheorem{example}{Example}
\newtheorem{theo}{Theorem}
\newtheorem{lemma}{Lemma}
\newtheorem{remark}{Remark}

\journal{........................}

%
\makeatletter
\newcommand{\thickhline}{%
\noalign {\ifnum 0=`}\fi \hrule height 3pt
\futurelet \reserved@a \@xhline
}
\makeatother



\usepackage{numcompress}
\bibliographystyle{model3-num-names}




\bibliographystyle{elsarticle-num}

 \begin{document}

 \begin{frontmatter}

 \title{An improved bound on Legendre approximation}

\author[firstaddress]{Mehdi Hamzehnejad}
\author[secendaddress]{Mohammad Mehdi Hosseini}
\author[secendaddress]{Abbas Salemi\corref{mycorrespondingauthor}}
\cortext[mycorrespondingauthor]{Corresponding author}
\ead{salemi@uk.ac.ir}
\address[firstaddress]{Department of Mathematic, Kerman Graduate University of Advanced Technology, Kerman, Iran.}
\address[secendaddress]{ Department of Applied Mathematics and Mahani Mathematical Research Center, Shahid Bahonar University of Kerman, Kerman, Iran.}
\begin{abstract}
In this paper, new relations between the derivatives of the Legendre polynomials are obtained, and by these relations, new upper bounds for the Legendre coefficients of differentiable functions are presented. 
These upper bounds are sharp and 
cover more categories of differentiable functions. Moreover, 
 new and sharper bounds for the approximation error of the partial sums of Legendre polynomials are provided. Numerical examples are given to validate our theoretical results.
 \end{abstract}

 \begin{keyword}
Legendre polynomials\sep Legendre expansions\sep approximation error \sep error bound. 
\MSC[2010] 41A25\sep 41A10.
\end{keyword}

\end{frontmatter}


 \section{Introduction}
 One of the most important properties of Legendre polynomials $ L_n(x),n=0,1,\ldots $ 
is their ability to approximate a function which has fascinated a great attention in recent years. See for example:  \cite{Su64,Shu12,Xi13,Xi20,Hai18,Ham22,Hai21}.
Let us suppose that a suitably smooth function $f$ has the following expansion:
\begin{equation}\label{exp}
f(x)=\sum_{n=0}^{\infty}a_nL_n(x),
\end{equation}
where $ a_n, $ Legendre coefficient, is  defined by
\begin{equation*}
a_n=\left(n+\frac{1}{2}\right)\int_{-1}^{1}f(x)L_n(x)dx.
\end{equation*}
Consider the approximation of differentiable functions using the partial sum
\[ f_N(x)=\sum_{n=0}^{N}a_nL_n(x).\]
Several papers are available in literature dealing with this topic under different smoothness assumptions of differentiable functions.

 Suetin proved that if the function $ f $ has $ r $ continuous derivatives on $ [-1,1] $ and $ f^{(r)}\in Lip \alpha $ with $ r+\alpha\geq\frac{1}{2} $, then \citep{Su64}
\begin{equation*}\label{St}
\Vert f(x)-f_N(x)\Vert_{\infty}\leq \mathit{C}\frac{log N}{N^{r+\alpha-\frac{1}{2}}},\quad -1\leq x\leq1,
\end{equation*}
where $ \mathit{C} $ is positive constant independent of $ n $.

Later on, in \citep{Shu12} the authors obtained error estimates which depend on the Legendre coefficients of the function $ f $. They showed that if $ f, f^{\prime},\cdots,f^{(r-1)} $ are absolutely continuous and the $ r^{th} $ derivative $ f^{(r)} $ is of bounded variation and 
\begin{equation} 
V_r=\int_{-1}^{1}\frac{\vert f^{(r+1)}(x)\vert}{\sqrt{1-x^2}}dx<\infty,
\label{VR1}
\end{equation}
then
\begin{equation}
\Vert f(x)-f_N(x)\Vert_{\infty}\leq\sqrt{\frac{\pi}{2}}\frac{V_r}{(r-1)\sqrt{N-r}\prod_{j=2}^{r}2N-2j+3}.
\label{wel}
\end{equation}

Under the assumptions stated in \citep{Shu12}, Wang \citep{Hai18} proved that if
\begin{equation}
\widehat{V}_r=\int_{-1}^{1}\frac{\vert f^{(r+1)}(x)\vert}{\sqrt[4]{1-x^2}}dx<\infty,
\label{VR}
\end{equation}
then
\begin{equation}
\Vert f(x)-f_N(x)\Vert_{\infty}\leq \left\{
		\begin{array}{lr}
	\frac{4\widehat{V}_1}{\sqrt{\pi(2N-5)}}&r=1\\
			\frac{2\widehat{V}_r}{\prod_{j=2}^{r}(N-j+\frac{1}{2})(r-1)\sqrt{\pi(2N-2r-1)}}& r\geq2.
		\end{array}\right.
	\label{wang}
\end{equation}
In all the above estimates the rate of convergence is essentially $ N^{-r+\frac{1}{2}} $ that in each case the function $ f $ depends on certain smoothness properties.

Now we want to obtain a new upper bound for the Legendre coefficients without the assumptions in (\ref{VR1}) and (\ref{VR}) and provide new approximation error for more categories of differentiable functions. For this purpose, we state new relationship  between derivatives of Legendre polynomials. Moreover, by using the telescoping series and by induction, we obtain new error estimates in uniform norm which are sharper than previous bounds presented in \cite{Shu12, Hai18, Hai21}.

The rest of this paper is organized as follows. In Section \ref{sec2}, using some properties of Legendre polynomials, we obtain a new expansion for derivatives of Legendre polynomials and by using this expansion, we derive a new upper bound for the Legendre coefficients of differentiable functions. Also, by using the asymptotic of the Legendre coefficients, we provide new approximation errors of a function $ f (x) $ by partial sums of Legendre polynomials. In Section \ref{sec3}, numerical results are presented and we compare the proposed upper bound with the upper bounds were presented so far. 
\section{New approximation errors of Legendre polynomials}\label{sec2}
In this section some necessary properties of Legendre polynomials are presented.
Then we provide a new expansion for derivative of Legendre polynomials. Using this expansion, new approximation errors of the partial sums of the Legendre polynomials are provided. 

Legendre polynomials are the eigenfunctions of the following Sturm-Liouville problem \citep{Shen11}
\begin{equation}\label{Str}
\left(\left(1-x^2\right)L_n^{\prime}(x)\right)^{\prime}=-n(n+1)L_n(x).
\end{equation}
Also, the following relation is hold \citep{Shen11}
\begin{equation}\label{rab}
\left(1-x^2\right)L_n^{\prime}(x)=\frac{n(n+1)}{2n+1}\left(L_{n-1}(x)-L_{n+1}(x)\right).
\end{equation}
The following orthogonality is hold on $ \left[ -1,1\right]$:
\begin{equation*}
\int_{-1}^{1}L_n(x)L_m(x)dx=\frac{\delta_{mn}}{n+\frac{1}{2}},
\end{equation*}
where $ \delta_{mn} $ is the Kronecker delta.

By using Sturm-Liouville problem and relation (\ref{rab}), we provide a new expansion for derivative of Legendre polynomials. For simplicity, we use the following notation.
\begin{equation}\label{sim}
\mathcal{L}_n(x):=\frac{\left(1-x^2\right)L_n^{\prime}(x)}{n(n+1)}.
\end{equation}
The following key lemma present a new relation between derivatives of Legendre polynomials.
\begin{lemma}\label{Key}
Let $\mathcal{L}_n(x) $ be as in (\ref{sim}). Then $\mathcal{L}_n(x)$ has the following representation, for $ 1\leq r \leq n-1$. 
\begin{align}\label{Ind4}
\mathcal{L}_n(x)&=\left(\frac{(-1)^r\mathcal{L}_{n-r}(x)}{\lambda^1_{n,r}}+\frac{(-1)^{r-1}\mathcal{L}_{n-r+2}(x)}{\lambda^2_{n,r}}+ \cdots \right. \nonumber\\
&\left.-\frac{\mathcal{L}_{n+r-2}(x)}{\lambda^{2^{r-1}}_{n,r}}
+\frac{\mathcal{L}_{n+r}(x)}{\lambda^{2^{r}}_{n,r}}\right)^{(r)},
\end{align}
where
\begin{equation*}\nonumber
 \lambda^j_{n, r}= \left\{
 \begin{array}{lr}
 \prod_{i=1}^{r}(2n-2i+3)&j=1, 2\\
 \prod_{i=1}^{r}(2n+2i-1)&j=2^{r}-1, 2^r
 \end{array}\right.
 \end{equation*} 
 Moreover, for $1\leq j \leq 2^{r-1}$,
 \[\lambda^1_{n, r}=\lambda^2_{n, r} \leq \lambda^{2j-1}_{n, r} = \lambda^{2j}_{n, r} \leq \lambda^{2^{r}-1}_{n, r}=\lambda^{2^{r}}_{n, r}.
\]
\end{lemma}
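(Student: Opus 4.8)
The plan is to reduce the whole statement to two elementary identities and then induct on $r$.

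First I would record what (\ref{Str}) and (\ref{rab}) give about $\mathcal{L}_n$. From the Sturm--Liouville equation (\ref{Str}) and the definition (\ref{sim}) one reads off $\mathcal{L}_n'(x) = -L_n(x)$ for $n\ge 1$, while (\ref{rab}) rewrites as $\mathcal{L}_n(x) = \tfrac{1}{2n+1}\bigl(L_{n-1}(x)-L_{n+1}(x)\bigr)$. Substituting $L_{n\pm1} = -\mathcal{L}_{n\pm1}'$ into the latter collapses both into the single recursion
\[
\mathcal{L}_n(x) = \frac{1}{2n+1}\bigl(\mathcal{L}_{n+1}(x)-\mathcal{L}_{n-1}(x)\bigr)',
\]
valid for every $n\ge 2$ (so that all indices stay $\ge 1$). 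Since $\lambda^1_{n,1}=\lambda^2_{n,1}=2n+1$, this is exactly (\ref{Ind4}) for $r=1$; that is the base case.

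For the inductive step I would fix $n$, assume (\ref{Ind4}) for some $r$ with $r+1\le n-1$, and think of the bracketed expression for $\mathcal{L}_n$ not as a combined sum but as a list of $2^r$ fractions $\pm\mathcal{L}_{m}/\lambda$, one for each length-$r$ lattice path $n=k_0,k_1,\dots,k_r$ with $k_i-k_{i-1}=\pm1$: such a path carries the factor $\prod_{i=0}^{r-1}\tfrac{1}{2k_i+1}$, the sign $(-1)^{d}$ where $d$ is the number of down-steps, and the subscript $k_r$. Combining like terms recovers the displayed form, and the two extreme paths (all down, all up) produce the first and last fractions, with denominators $\prod_{i=1}^r(2n-2i+3)=\lambda^1_{n,r}$ and $\prod_{i=1}^r(2n+2i-1)=\lambda^{2^r}_{n,r}$. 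Now apply the $r=1$ recursion to each $\mathcal{L}_{k_r}$ — legitimate because the smallest subscript present is $n-r\ge 2$ — which replaces each path by its two one-step extensions and pulls the resulting derivative out through the $r$-th derivative to make an $(r+1)$-th one. The new denominator factor is $2k_r+1$ and the extra sign is $-1$ on the down-extension; a short check confirms this is precisely (\ref{Ind4}) with $r$ replaced by $r+1$, and that the leftmost/rightmost fractions update to $\lambda^1_{n,r+1}$ and $\lambda^{2^{r+1}}_{n,r+1}$.

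It remains to prove the chain of inequalities. Two length-$r$ paths agreeing in their first $r$ vertices $k_0,\dots,k_{r-1}$ (differing only in the last step) give fractions with the same denominator $\prod_{i=0}^{r-1}(2k_i+1)$; listing the $2^r$ fractions in the natural binary-tree order (at every vertex, down-extension before up-extension) makes such pairs consecutive, which yields $\lambda^{2j-1}_{n,r}=\lambda^{2j}_{n,r}$ for all $j$. Finally, every path obeys $n-i\le k_i\le n+i$ for each $i$, and since $k_{r-1}\ge n-(r-1)\ge 1$ all factors $2k_i+1$ are positive, so $\prod_{i=0}^{r-1}(2(n-i)+1)\le\prod_{i=0}^{r-1}(2k_i+1)\le\prod_{i=0}^{r-1}(2(n+i)+1)$, i.e. $\lambda^1_{n,r}\le\lambda^j_{n,r}\le\lambda^{2^r}_{n,r}$ for every $j$; combined with the pairing this is the asserted inequality. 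I expect the main obstacle to be purely organizational: arranging the ``$2^r$ uncombined fractions indexed by lattice paths'' bookkeeping so that denominators, signs, and subscripts all line up with the closed form in (\ref{Ind4}) after the derivative is extracted, while keeping the index constraints ($\mathcal{L}_m$ needs $m\ge1$, the recursion needs $m\ge2$) synchronized with the hypothesis $1\le r\le n-1$ at each stage.
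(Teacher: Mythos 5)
Your proposal is correct and follows essentially the same route as the paper: derive $\mathcal{L}_n'=-L_n$ and the one-step identity $\mathcal{L}_n=\tfrac{1}{2n+1}\bigl(\mathcal{L}_{n+1}-\mathcal{L}_{n-1}\bigr)'$, then iterate it by induction on $r$, tracking the $2^r$ uncombined terms and their denominators. Your lattice-path bookkeeping is just a more explicit organization of the paper's induction (and even makes the inequality chain $\lambda^1_{n,r}\le\lambda^j_{n,r}\le\lambda^{2^r}_{n,r}$ follow directly from $n-i\le k_i\le n+i$ rather than from the inductive hypothesis), so no substantive difference.
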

\begin{proof}
By combining (\ref{Str}) and (\ref{sim}), we obtain
\begin{equation}\label{LLL1}
\left(\mathcal{L}_{n}(x)\right)^{\prime}=-L_n(x).
\end{equation}
If we plug (\ref{LLL1}) in (\ref{rab}), we get
\begin{equation}\label{LLL}
\mathcal{L}_{n}(x)=\frac{1}{2n+1}\left(-\mathcal{L}_{n-1}(x)+\mathcal{L}_{n+1}(x)\right)'.
\end{equation}
Then for $r=1$, $\mathcal{L}_{n}(x)$ has the following respresentation.
\begin{equation}\label{comb}
\mathcal{L}_n(x)=\left(-\frac{\mathcal{L}_{n-1}(x)}{(2n+1)} +\frac{\mathcal{L}_{n+1}(x)}{(2n+1)}\right)^{(1)}
=\left(-\frac{\mathcal{L}_{n-1}(x)}{\lambda^1_{n,1}} +\frac{\mathcal{L}_{n+1}(x)}{\lambda^2_{n,1}}\right)^{(1)}, 
\end{equation}
where $\lambda^1_{n,1}=\lambda^2_{n,1}.$
By using (\ref{LLL}) in two numerators of (\ref{comb}), we get the following representation for $\mathcal{L}_n(x)$
\begin{align}\label{comb2}
\mathcal{L}_n(x)&=\left(\frac{\mathcal{L}_{n-2}(x)}{\lambda^1_{n,1}(2n-1)}-\frac{\mathcal{L}_{n}(x)}{\lambda^1_{n,1}(2n-1)}
-\frac{\mathcal{L}_{n}(x)}{\lambda^2_{n,1}(2n+3)}+\frac{\mathcal{L}_{n+2}(x)}{\lambda^2_{n,1}(2n+3)}\right)^{(2)}\nonumber\\
&=\left(\frac{\mathcal{L}_{n-2}(x)}{\lambda^1_{n,2}}-\frac{\mathcal{L}_{n}(x)}{\lambda^2_{n,2}}
-\frac{\mathcal{L}_{n}(x)}{\lambda^3_{n,2}}+\frac{\mathcal{L}_{n+2}(x)}{\lambda^4_{n,2}}\right)^{(2)},
\end{align}
where $\lambda^1_{n,2}=\lambda^2_{n,2}\leq \lambda^3_{n,2}=\lambda^4_{n,2} .$
By induction, we assume that (\ref{Ind4}) holds for $1\leq r \leq n-2$. We will show that (\ref{Ind4}) holds for $r+1$.
Using (\ref{LLL}) in all numerators of (\ref{Ind4}), we get the following representation for $\mathcal{L}_n(x)$ with $2^{r+1}$ terms as follows:
\begin{align}
\mathcal{L}_n(x)&=\left(\frac{(-1)^{r+1}\mathcal{L}_{n-r-1}(x)}{\lambda^1_{n, r}(2n-2r+1)}+\frac{(-1)^{r}\mathcal{L}_{n-r+1}(x)}{\lambda^1_{n, r}(2n-2r+1)}+ \cdots \right. \nonumber\\
&\left.-\frac{\mathcal{L}_{n+r-1}(x)}{\lambda^{2^{r}}_{n, r}(2n+2r+1)}
+\frac{\mathcal{L}_{n+r+1}(x)}{\lambda^{2^{r}}_{n, r}(2n+2r+1)}\right)^{(r+1)},\nonumber
\end{align}
Easy computation shows that
\begin{equation}\nonumber
\lambda^{2j-1}_{n,r+1}=\lambda^{2j}_{n,r+1}= \left\{
 \begin{array}{lr}
 \lambda^{1}_{n, r} (2n-2r+1)& j=1\\
\lambda^{j}_{n,r} (\gamma_j)&2\leq j\leq2^{r}-1\\
\lambda^{2^r}_{n, r} (2n+2r+1)& j=2^{r}
 \end{array}\right.
 \end{equation} 
where $2n-2r+3\leq \gamma_j \leq 2n+2r-1.$ 
Also, by assumption, we know that 
$\lambda^{1}_{n, r} \leq \lambda^{j}_{n, r} \leq \lambda^{2^r}_{n, r}.$ Then 
\[\lambda^{1}_{n, r+1} =\lambda^{2}_{n, r+1}\leq\lambda^{2j-1}_{n,r+1}=\lambda^{2j}_{n,r+1} \leq \lambda^{2^{r+1}-1}_{n, r+1}=\lambda^{2^{r+1}}_{n, r+1}, ~~ 1\leq j\leq2^{r}.\] 
Therefore, the result holds.
\end{proof}
\begin{lemma}\label{le} 
Suppose that $ S_{n}^{0}=\frac{2}{2n+1} $ and for $ 1\leq r\leq n-1 $ 
\begin{equation}
S_{n}^{r}=\frac{2}{\lambda^1_{n,r}(2n-2r+1)}+\frac{2}{\lambda^2_{n,r}(2n-2r+5)}+ \cdots +\frac{2}{\lambda^{2^{r}-1}_{n,r}(2n+2r-3)}+\frac{2}{\lambda^{2^{r}}_{n,r}(2n+2r+1)}.
\end{equation}
Then for $ r\geq1 $
\begin{equation}\label{baz}
S_{n}^{r}=\frac{1}{2n+1}\left(S_{n-1}^{r-1}+S_{n+1}^{r-1}\right).
\end{equation}
\end{lemma}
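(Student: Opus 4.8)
The plan is to read the number $S_n^r$ directly off the representation (\ref{Ind4}) of $\mathcal{L}_n$ and then let the one-step recursion (\ref{LLL}) do the work, so that (\ref{baz}) becomes a bookkeeping statement rather than a computation. First I would observe that $S_n^r$ is exactly the sum, over the $2^r$ summands of (\ref{Ind4}), of $\frac{2}{\lambda(2m+1)}$, where $\lambda$ is the denominator of that summand and $m$ is the subscript of the Legendre factor $\mathcal{L}_m$ appearing in it: the $j=1$ summand has $m=n-r$ and gives $\frac{2}{\lambda^1_{n,r}(2n-2r+1)}$, the $j=2$ summand has $m=n-r+2$ and gives $\frac{2}{\lambda^2_{n,r}(2n-2r+5)}$, and so on down to the last two with $m=n+r-2$ and $m=n+r$; term by term this matches the definition of $S_n^r$, and for $r=0$ it degenerates to $S_n^0=\frac{2}{2n+1}$.

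Next I would use that, by (\ref{LLL}), $\mathcal{L}_n=\frac{1}{2n+1}\big(-\mathcal{L}_{n-1}+\mathcal{L}_{n+1}\big)'$, and that substituting into the two numerators their own representations (\ref{Ind4}) with parameter $r-1$ reproduces precisely the representation of $\mathcal{L}_n$ with parameter $r$ built in the proof of Lemma \ref{Key} (expanding in two stages gives the same result as expanding $r$ times, since the substitution acts on each summand independently). Under this substitution every summand $\frac{\epsilon\,\mathcal{L}_m}{\mu}$ of the representation of $\mathcal{L}_{n-1}$, resp. $\mathcal{L}_{n+1}$, turns into a summand of the representation of $\mathcal{L}_n$ with the \emph{same} subscript $m$ and denominator $(2n+1)\mu$, the sign being immaterial since every term of $S_n^r$ is positive. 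Hence the first $2^{r-1}$ summands of (\ref{Ind4}) for $\mathcal{L}_n$ descend from $\mathcal{L}_{n-1}$ and the last $2^{r-1}$ from $\mathcal{L}_{n+1}$, and summing $\frac{2}{\lambda(2m+1)}$ over each half yields $\frac{1}{2n+1}S_{n-1}^{r-1}$ and $\frac{1}{2n+1}S_{n+1}^{r-1}$ respectively; adding them gives (\ref{baz}). The base case $r=1$ is the direct check $\frac{2}{(2n+1)(2n-1)}+\frac{2}{(2n+1)(2n+3)}=\frac{1}{2n+1}\big(S_{n-1}^0+S_{n+1}^0\big)$.

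I expect the only delicate point to be the bookkeeping in the first step: confirming that the listed denominators $2n-2r+1,\ 2n-2r+5,\ \dots,\ 2n+2r-3,\ 2n+2r+1$ in the definition of $S_n^r$ are exactly the numbers $2m+1$ obtained as $m$ runs through the subscripts of the $\mathcal{L}_m$ in (\ref{Ind4}), and that the ``first half / second half'' split respects this order. Both follow from the mechanism already recorded in the proof of Lemma \ref{Key}, namely that passing from parameter $r$ to $r+1$ replaces the term carrying $\mathcal{L}_m$ by two terms carrying $\mathcal{L}_{m-1}$ and $\mathcal{L}_{m+1}$ with denominator multiplied by $2m+1$. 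A more pedestrian route would be a straight induction on $r$ using the recursion for the $\lambda^j_{n,r+1}$ from Lemma \ref{Key}, but that forces one to handle the pairing $\frac{1}{2m-1}+\frac{1}{2m+3}$ by hand and is noticeably messier than exploiting the recursive structure directly.
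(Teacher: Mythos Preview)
Your argument is correct and rests on the same mechanism as the paper's, namely the recursive construction of the $\lambda^j_{n,r}$ in Lemma~\ref{Key}. The difference is one of packaging: you read $S_n^r$ as the ``shadow'' $\sum_j \tfrac{2}{\lambda^j_{n,r}(2m_j+1)}$ of the expansion (\ref{Ind4}) and let the tree structure of that expansion (first $2^{r-1}$ terms inherited from $\mathcal{L}_{n-1}$, last $2^{r-1}$ from $\mathcal{L}_{n+1}$, each with its denominator multiplied by $2n+1$ and its subscript $m$ preserved) deliver (\ref{baz}) immediately. The paper instead carries out precisely the ``pedestrian route'' you allude to at the end: it writes out $\tfrac{1}{2n+1}\bigl(S_{n-1}^{r}+S_{n+1}^{r}\bigr)$ term by term and matches the result to $S_n^{r+1}$ using $\lambda^{2j-1}_{n,r+1}=\lambda^{2j}_{n,r+1}$ together with $\lambda^{1}_{n,r+1}=(2n+1)\lambda^{1}_{n-1,r}$, $\lambda^{2^{r+1}}_{n,r+1}=(2n+1)\lambda^{2^{r}}_{n+1,r}$, and so on. Your structural phrasing is cleaner and makes clear why no real computation is needed; the paper's explicit version has the minor advantage of staying entirely at the level of the $\lambda$'s without reinvoking how (\ref{Ind4}) was assembled.
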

\begin{proof}
To prove relation (\ref{baz}), We can see that 
\begin{equation}
S_{n}^{1}=\frac{2}{\lambda^1_{n,1}(2n-1)}+\frac{2}{\lambda^2_{n,1}(2n+3)}=\frac{1}{2n+1}\left(\frac{2}{2n-1}+\frac{2}{2n+3}\right)=\frac{1}{2n+1}\left(S_{n-1}^{0}+S_{n+1}^{0}\right).
\end{equation}
This complete the proof for $ r=1 $. Also, for $ r=2 $ we obtain
\begin{small}
\begin{align}
S_{n}^{2}&=\frac{2}{\lambda^1_{n,2}(2n-3)}+\frac{2}{\lambda^2_{n,2}(2n+1)}+\frac{2}{\lambda^{3}_{n,2}(2n+3)}+\frac{2}{\lambda^{4}_{n,2}(2n+5)}\nonumber\\
&=\frac{1}{2n+1}\left(\frac{2}{(2n-1)(2n-3)}+\frac{2}{(2n-1)(2n+1)}+\frac{2}{(2n+3)(2n+1)}+\frac{2}{(2n+3)(2n+5)}\right)\nonumber\\
&=\frac{1}{2n+1}\left(\frac{2}{\lambda^1_{n-1,1}(2n-3)}+\frac{2}{\lambda^2_{n-1,1}(2n+1)}+\frac{2}{\lambda^{1}_{n+1,1}(2n+1)}+\frac{2}{\lambda^{2}_{n+1,1}(2n+5)}\right)\nonumber\\
&=\frac{1}{2n+1}\left(S_{n-1}^{1}+S_{n+1}^{1}\right).
\end{align}
\end{small}
This complete the proof for $ r=2 $. By induction, suppose (\ref{baz}) holds for nonnegative integer $ r $. We will show that (\ref{baz}) holds for $r+1$. 
So we have
\begin{small}
\begin{align}\label{S1}
&\frac{1}{2n+1}\left(S_{n-1}^{r}+S_{n+1}^{r}\right)\nonumber\\
&=\frac{1}{2n+1}\left(\frac{2}{\lambda^1_{n-1,r}(2n-2r-1)}+\frac{2}{\lambda^2_{n-1,r}(2n-2r+3)}+ \cdots
+\frac{2}{\lambda^{2^{r}-1}_{n-1,r}(2n+2r-5)}+\frac{2}{\lambda^{2^{r}}_{n-1,r}(2n+2r-1)}\right.\nonumber\\
&\left.+\frac{2}{\lambda^1_{n+1,r}(2n-2r+3)}+\frac{2}{\lambda^2_{n+1,r}(2n-2r+7)}+ \cdots
+\frac{2}{\lambda^{2^{r}-1}_{n+1,r}(2n+2r-1)}+\frac{2}{\lambda^{2^{r}}_{n+1,r}(2n+2r+3)}\right).
\end{align}
From Lemma \ref{Key}, for $ 1\leq j\leq2^{r-1} $ we have $ \lambda^{2j-1}_{n,r}=\lambda^{2j}_{n,r} $ . Also we know that
\begin{align}\nonumber
\lambda^1_{n,r+1}=\lambda^2_{n,r+1}=(2n+1)\lambda^1_{n-1,r},\quad\lambda^{2^r-1}_{n,r+1}=\lambda^{2^r}_{n,r+1}=(2n+1)\lambda^{2^r-1}_{n+1,r}.
\end{align}
So (\ref{S1}) becomes to
\begin{align}
&\frac{1}{2n+1}\left(S_{n-1}^{r}+S_{n+1}^{r}\right)\nonumber\\
&=\left(\frac{2}{\lambda^1_{n,r+1}(2n-2r-1)}+\frac{2}{\lambda^2_{n,r+1}(2n-2r+3)}
+ \cdots
+\frac{2}{\lambda^{2j-1}_{n,r+1}(2n+2r-5)}+\frac{2}{\lambda^{2j}_{n,r+1}(2n+2r-1)}\right.\nonumber\\
&\left.+\frac{2}{\lambda^{2i-1}_{n,r+1}(2n-2r+3)}+\frac{2}{\lambda^{2i}_{n,r+1}(2n-2r+7)}+ \cdots
+\frac{2}{\lambda^{2^{r}-1}_{n,r+1}(2n+2r-1)}+\frac{1}{\lambda^{2^{r}}_{n,r+1}(2n+2r+3)}\right)\nonumber\\
&=S_{n}^{r+1}.
\end{align}
\end{small}
which complete the proof.
\end{proof}
\begin{lemma}\label{l3}
Under the assumptions of Lemma \ref{le}, the following relation holds for $ r\geq0 $ 
\begin{equation}\label{jam1}
S_{n}^{r}=\frac{2^{r+1}}{\prod_{j=1}^{r+1}(2n-2r+4j-3)}
\end{equation}
\end{lemma}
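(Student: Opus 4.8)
The plan is to induct on $r$, using the two-term recurrence
$S_{n}^{r}=\frac{1}{2n+1}\left(S_{n-1}^{r-1}+S_{n+1}^{r-1}\right)$
established in Lemma \ref{le}. The base case $r=0$ is immediate: the right-hand side of (\ref{jam1}) reduces to $\frac{2^{1}}{2n+1}=\frac{2}{2n+1}$, which is exactly $S_{n}^{0}$ by definition. (One may equally check $r=1$ directly against Lemma \ref{le} as a sanity test.)

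For the inductive step, it is convenient to abbreviate $a_{j}:=2n-2r+4j-3$ for $1\le j\le r+1$, so that the claimed identity reads $S_{n}^{r}=2^{r+1}/(a_{1}a_{2}\cdots a_{r+1})$, with $a_{1}=2n-2r+1$ and $a_{r+1}=2n+2r+1$. Applying the induction hypothesis at level $r-1$ — namely $S_{m}^{r-1}=2^{r}/\prod_{j=1}^{r}(2m-2r+4j-1)$ — to the arguments $m=n-1$ and $m=n+1$, a short computation with the index identifications $2(n-1)-2r+4j-1=a_{j}$ and $2(n+1)-2r+4j-1=a_{j+1}$ shows
\[
S_{n-1}^{r-1}=\frac{2^{r}}{a_{1}a_{2}\cdots a_{r}},\qquad S_{n+1}^{r-1}=\frac{2^{r}}{a_{2}a_{3}\cdots a_{r+1}}.
\]
That is, the two denominators are precisely the ``front'' and ``back'' length-$r$ sub-products of $a_{1}a_{2}\cdots a_{r+1}$, overlapping in the common middle block $a_{2}\cdots a_{r}$.

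Substituting these into the recurrence and putting the two fractions over the common denominator $a_{1}a_{2}\cdots a_{r+1}$ gives
\[
S_{n}^{r}=\frac{2^{r}}{2n+1}\cdot\frac{a_{r+1}+a_{1}}{a_{1}a_{2}\cdots a_{r+1}}.
\]
The key algebraic observation is that $a_{1}+a_{r+1}=(2n-2r+1)+(2n+2r+1)=2(2n+1)$, so the factor $2n+1$ cancels and exactly one power of $2$ is gained, producing $S_{n}^{r}=2^{r+1}/(a_{1}\cdots a_{r+1})$, which is (\ref{jam1}). The only delicate point — and the step I would treat most carefully — is the bookkeeping that identifies $S_{n-1}^{r-1}$ and $S_{n+1}^{r-1}$ with the front and back sub-products: one must verify that shifting $n\to n\pm1$ in the hypothesis product $\prod_{j=1}^{r}(2n-2r+4j-1)$ lands exactly on $\{a_{1},\dots,a_{r}\}$ and $\{a_{2},\dots,a_{r+1}\}$ respectively, rather than being off by one shift. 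Once this telescoping overlap is pinned down, the remainder is the single-line cancellation above.
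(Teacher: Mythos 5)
Your proposal is correct and is essentially the paper's own argument: induction on $r$ using the recurrence $S_{n}^{r}=\frac{1}{2n+1}\bigl(S_{n-1}^{r-1}+S_{n+1}^{r-1}\bigr)$ from Lemma \ref{le}, identifying the two denominators as overlapping length-$r$ sub-products of the full product, and cancelling via $a_{1}+a_{r+1}=2(2n+1)$. The index bookkeeping you flag as delicate is exactly the computation the paper carries out, so nothing is missing.
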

\begin{proof}
We know that $ S_{n}^{0}=\frac{2}{2n+1} $ and 
\begin{equation}
S_{n}^{1}=\frac{1}{2n+1}\left(S_{n-1}^{0}+S_{n+1}^{0}\right)=\frac{4}{(2n-1)(2n+3)}.
\end{equation}
By induction, suppose (\ref{jam1}) holds integer $ r $. We will show that (\ref{jam1}) holds for $r+1$. Applying the relation (\ref{baz}) we obtain
\begin{align}
&\frac{1}{2n+1}\left(S_{n-1}^{r}+S_{n+1}^{r}\right)\nonumber\\
&=\frac{1}{2n+1}\left(\frac{2^{r+1}}{(2n-2r-1)\left[\prod_{j=1}^{r}(2n-2r+4j-1)\right]}
+\frac{2^{r+1}}{\left[\prod_{j=1}^{r}(2n-2r+4j-1)\right](2n+2r+3)}\right)\nonumber\\
&=\frac{1}{2n+1}\left(\frac{2^{r+1}(2n+2r+3+2n-2r-1)}{(2n-2r-1)\left[\prod_{j=1}^{r}(2n-2r+4j-1)\right](2n+2r+3)}\right)\nonumber\\
&=\frac{2^{r+2}}{\prod_{j=1}^{r+2}\left(2n-2(r+1)+4j-3\right)}=S_{n}^{r+1}.
\end{align}
Then the result holds. 
\end{proof}

Let $n \geq 1$, from \citep[eq. 27]{Antonov2010}, we see that
\begin{equation}\label{LL}
\vert L_{n-1}(x)- L_{n+1}(x)\vert<\frac{2\mathbf{A}(1-x^2)^{\frac{1}{4}}}{\sqrt{n+\frac{1}{3}}},\quad \mathbf{A}=0.825031.
\end{equation}
Also, by using (\ref{rab}) and (\ref{sim}) we obtain the following inequality of derivative of Legendre polynomials for $ x\in[-1,1]. $
\begin{equation}\label{eqb1}
\vert \mathcal{L}_n(x)\vert\leq\frac{1}{2n+1}\left(\vert L_{n-1}(x)- L_{n+1}(x)\vert\right)
\end{equation}
Therefore, by using (\ref{LL}) and (\ref{eqb1}), we get 
\begin{equation}\label{eqb}
\vert \mathcal{L}_n(x)\vert<\frac{\mathbf{A}(1-x^2)^{\frac{1}{4}}}{(n+\frac{1}{2})\sqrt{n+\frac{1}{3}}},\quad -1\leq x\leq1.
\end{equation}
In the next theorem, new upper bounds for Legendre coefficients of differentiable functions are derived.
\begin{theo}\label{Th}
Let $ f, f^{\prime},\cdots, f^{(r-1)} $ be absolutely continuous and the $ r^{th} $ derivative $ f^{(r)} $ is of bounded variation on $ [-1,1] $. Then the upper bound for the Legendre coefficients of the function $ f $ for $ n\geq r $ is as follows:
\begin{equation}\label{HCo} 
\left\vert a_{n}\right\vert<\frac{\mathbf{A}(n+\frac{1}{2})U_r}{\sqrt{n-r+\frac{1}{3}}\prod_{j=1}^{r+1}(n-r+2j-\frac{3}{2})},
\end{equation} 
where, $ U_{r}=\int_{-1}^{1}\vert f^{(r+1)}(x)\vert dx $.
\end{theo}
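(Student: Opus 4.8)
The plan is to integrate $a_n$ by parts $r+1$ times, each time replacing the current kernel by an antiderivative that is again a linear combination of the functions $\mathcal{L}_m$, so that all boundary terms vanish, and then to estimate the resulting kernel by (\ref{eqb}) together with Lemmas \ref{le}--\ref{l3}. Throughout, write $H_{n,r}(x)$ for the bracketed expression in (\ref{Ind4}), so that $\mathcal{L}_n=(H_{n,r})^{(r)}$ by Lemma \ref{Key}; note that $H_{n,r}$ is a polynomial and a finite $\mathbb{R}$-combination of $\mathcal{L}_{n-r},\mathcal{L}_{n-r+2},\dots,\mathcal{L}_{n+r}$, and that every $\mathcal{L}_m$ vanishes at $x=\pm1$ because of the factor $1-x^{2}$ in (\ref{sim}).

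First I would start from $a_n=(n+\tfrac12)\int_{-1}^{1}f(x)L_n(x)\,dx$ and use (\ref{LLL1}), i.e.\ $L_n=-(\mathcal{L}_n)'$; one integration by parts, with vanishing boundary term since $\mathcal{L}_n(\pm1)=0$, gives $a_n=(n+\tfrac12)\int_{-1}^{1}f'(x)\mathcal{L}_n(x)\,dx$. Next I would integrate by parts $r$ more times, \emph{one derivative at a time}: if at some stage $a_n=\pm(n+\tfrac12)\int_{-1}^{1}f^{(k+1)}(x)G(x)\,dx$ with $G=\sum_m c_m\mathcal{L}_m$, then by (\ref{LLL}) we have $(2m+1)\mathcal{L}_m=(\mathcal{L}_{m+1}-\mathcal{L}_{m-1})'$, so $G=\widetilde G{}'$ with $\widetilde G=\sum_m\frac{c_m}{2m+1}(\mathcal{L}_{m+1}-\mathcal{L}_{m-1})$ again an $\mathcal{L}$-combination; integrating by parts once more, the boundary term $\pm[f^{(k+1)}\widetilde G]_{-1}^{1}$ vanishes, and we pass to $a_n=\mp(n+\tfrac12)\int_{-1}^{1}f^{(k+2)}(x)\widetilde G(x)\,dx$. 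Starting from $G=\mathcal{L}_n$ and noting that the transformation $G\mapsto\widetilde G$ is exactly the step used in the proof of Lemma \ref{Key}, after $r$ iterations the kernel is $H_{n,r}$, so
\begin{equation*}
a_n=(-1)^{r}\Big(n+\tfrac12\Big)\int_{-1}^{1}f^{(r+1)}(x)\,H_{n,r}(x)\,dx ,
\end{equation*}
all the integrations by parts being legitimate under the stated regularity (the last in the Riemann--Stieltjes sense, as $f^{(r)}$ has bounded variation).

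It remains to bound $H_{n,r}$. Combining (\ref{LLL}) with Lemma \ref{Key} gives the recursion $H_{n,r}=\frac{1}{2n+1}\big(H_{n+1,r-1}-H_{n-1,r-1}\big)$, with $H_{n,0}=\mathcal{L}_n$. I claim
\begin{equation*}
|H_{n,r}(x)|<\frac{\mathbf{A}\,(1-x^{2})^{1/4}\,S_n^{r}}{\sqrt{\,n-r+\tfrac13\,}},\qquad -1\le x\le 1 ,
\end{equation*}
by induction on $r$: the case $r=0$ is (\ref{eqb}) together with $S_n^{0}=\frac{2}{2n+1}=\frac{1}{n+1/2}$, and for the step one uses $|H_{n,r}|\le\frac{1}{2n+1}(|H_{n+1,r-1}|+|H_{n-1,r-1}|)$, the inductive bound, the inequality $\sqrt{n-r+2+\tfrac13}\ge\sqrt{n-r+\tfrac13}$, and the recursion (\ref{baz}) for $S_n^{r}$. (Equivalently, one may expand $H_{n,r}$ termwise via Lemma \ref{Key}, bound each $|\mathcal{L}_{m}|$ with $m\ge n-r$ by (\ref{eqb}), and recognize the resulting sum as $S_n^{r}$ via Lemma \ref{le}.) Inserting this estimate, using $(1-x^{2})^{1/4}\le1$ on $[-1,1]$ and $U_r=\int_{-1}^{1}|f^{(r+1)}|$, yields $|a_n|<\mathbf{A}(n+\tfrac12)S_n^{r}U_r/\sqrt{n-r+\tfrac13}$; finally Lemma \ref{l3} and the identity $2n-2r+4j-3=2(n-r+2j-\tfrac32)$ give $S_n^{r}=\big(\prod_{j=1}^{r+1}(n-r+2j-\tfrac32)\big)^{-1}$, which turns the last bound into (\ref{HCo}).

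The step I expect to be the main obstacle is the integration by parts: one must avoid peeling all $r$ derivatives off $(H_{n,r})^{(r)}$ at once, since the intermediate boundary terms would involve $L_m$ and its derivatives at $\pm1$ and do not vanish; the device is to re-express the kernel at \emph{each} stage as the derivative of a fresh $\mathcal{L}$-combination, whose single factor $1-x^{2}$ kills the boundary contribution. Once that is in place, identifying the final kernel with $H_{n,r}$ and running the induction for its size are routine given Lemmas \ref{Key}--\ref{l3}.
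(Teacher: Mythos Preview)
Your proof is correct and follows essentially the same route as the paper: one integration by parts to reach $(n+\tfrac12)\int_{-1}^{1}f'\mathcal{L}_n$, then $r$ more using the structure from Lemma~\ref{Key}, a termwise bound on the kernel via (\ref{eqb}), and the collapse to $S_n^r$ via Lemmas~\ref{le}--\ref{l3}. Your flagged ``main obstacle'' is a non-issue, though: the construction in Lemma~\ref{Key} in fact yields $(H_{n,r})'=H_{n,r-1}$ (each step writes every $\mathcal{L}_m$ as a \emph{derivative} of an $\mathcal{L}$-combination), so $(H_{n,r})^{(k)}=H_{n,r-k}$ is again an $\mathcal{L}$-combination and vanishes at $\pm1$; hence the paper's direct $r$-fold integration by parts has vanishing boundary terms at every stage, and your step-by-step device, while perfectly valid, is not needed.
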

\begin{proof}
By using (\ref{Str}), the Legendre coefficients for the function $ f(x) $ are as follows
\begin{equation*}
a_n=\left(n+\frac{1}{2}\right)\int_{-1}^{1}f(x)L_n(x)dx=-(n+\frac{1}{2})\int_{-1}^{1}f(x)\left(\mathcal{L}_n(x)\right)^{\prime}dx.
\end{equation*}
Using Integration by parts
\begin{equation}\label{Part2}
a_{n}=-(n+\frac{1}{2})\left[f(x)\mathcal{L}_n(x)\right]_{-1}^1+(n+\frac{1}{2})\int_{-1}^{1}f^{\prime}(x)\mathcal{L}_n(x)dx.
\end{equation}
Since $ \mathcal{L}_n(1)=\mathcal{L}_n(-1)=0, $ we obtain that the first part in (\ref{Part2}) vanishes and we get
\begin{equation}\label{tas}
a_{n}=(n+\frac{1}{2})\int_{-1}^{1}f^{\prime}(x)\mathcal{L}_n(x)dx.
\end{equation}
Using (\ref{eqb}) and applying $ U_0=\int_{-1}^{1}\left\vert f^{\prime}(x)\right\vert dx $, we get
\begin{equation}\label{Bon}
\vert a_{n}\vert\leq\frac{\mathbf{A}U_0}{\sqrt{n+\frac{1}{3}}}.
\end{equation}
which complete the proof for $ r=0 $.

For $ r\geq 1 $, applying (\ref{Ind4}) in (\ref{tas}) we have
\begin{align}
a_n=(n+\frac{1}{2})\int_{-1}^1f^{\prime}(x)&\left(\frac{(-1)^r\mathcal{L}_{n-r}(x)}{\lambda^1_{n,r}}+\frac{(-1)^{r-1}\mathcal{L}_{n-r+2}(x)}{\lambda^2_{n,r}}+ \cdots \right. \nonumber\\
&\left.-\frac{\mathcal{L}_{n+r-2}(x)}{\lambda^{2^{r-1}}_{n,r}}
+\frac{\mathcal{L}_{n+r}(x)}{\lambda^{2^{r}}_{n,r}}\right)^{(r)}dx.
\end{align}
Since $ \mathcal{L}_{n+j}(1)=\mathcal{L}_{n+j}(-1)=0, -r\leq j\leq r, $ with $ r $ integrations by parts and vanishing the first term at each steps, we get 
\begin{align}
a_n=(n+\frac{1}{2})\int_{-1}^1f^{(r+1)}(x)&\left(\frac{(-1)^r\mathcal{L}_{n-r}(x)}{\lambda^1_{n,r}}+\frac{(-1)^{r-1}\mathcal{L}_{n-r+2}(x)}{\lambda^2_{n,r}}+ \cdots \right. \nonumber\\
&\left.-\frac{\mathcal{L}_{n+r-2}(x)}{\lambda^{2^{r-1}}_{n,r}}
+\frac{\mathcal{L}_{n+r}(x)}{\lambda^{2^{r}}_{n,r}}\right)dx.
\end{align}
Using (\ref{eqb}) and applying the given condition $ U_r=\int_{-1}^{1}|f^{(r+1)}(x)|dx $, we obtain the following relation with $ 2^r $ terms.
\begin{align*}
\vert a_n\vert&<(n+\frac{1}{2})\left(\frac{2\mathbf{A}}{\lambda^1_{n,r}(2n-2r+1)\sqrt{n-r+\frac{1}{3}}}+\frac{2\mathbf{A}}{\lambda^2_{n,r}(2n-2r+5)\sqrt{n-r+\frac{7}{3}}}+ \cdots \right. \nonumber\\
&\left.+\frac{2\mathbf{A}}{\lambda^{2^r-1}_{n,r}(2n+2r-3)\sqrt{n+r-\frac{5}{3}}}
+\frac{2\mathbf{A}}{\lambda^{2^r}_{n,r}(2n+2r+1)\sqrt{n+r+\frac{1}{3}}}\right)U_r\\
&\leq\frac{\mathbf{A}(n+\frac{1}{2})U_r}{\sqrt{n-r+\frac{1}{3}}}\left(\frac{2}{\lambda^1_{n,r}(2n-2r+1)}+ \cdots 
+\frac{2}{\lambda^{2^r}_{n,r}(2n+2r+1)}\right)\nonumber\\
&=\frac{\mathbf{A}(n+\frac{1}{2})U_rS_n^r}{\sqrt{n-r+\frac{1}{3}}}
\end{align*}
By Lemma \ref{l3}, we obtain the following upper bound
\begin{align*}
\vert a_n\vert&<\frac{2^{r+1}\mathbf{A}(n+\frac{1}{2})U_r}{\prod_{j=1}^{r+1}(2n-2r+4j-3)\sqrt{n-r+\frac{1}{3}}}\\
&=\frac{\mathbf{A}(n+\frac{1}{2})U_r}{\prod_{j=1}^{r+1}(n-r+2j-\frac{3}{2})\sqrt{n-r+\frac{1}{3}}}.
\end{align*}
This complete the proof.
\end{proof}

In the next theorem, new approximation errors using the partial sums of the Legendre polynomials is provided.
\begin{theo}\label{Th2}
Let $ f, f^{\prime},\cdots, f^{(r-1)} $ be absolutely continuous and the $ r^{th} $ derivative $ f^{(r)} $ is of bounded variation on $ [-1,1] $. Then for $ r\geq1 $ and $ N\geq r+1 $
\begin{equation}\label{Co}
 \Vert f(x)-f_N(x)\Vert_{\infty}\leq \left\{
 \begin{array}{lr}
 \frac{2\mathbf{A}U_1}{\sqrt{N-\frac{2}{3}}}&r=1\\
 \frac{\mathbf{A}(N+\frac{3}{2})U_r}{\prod_{j=1}^{r}(N-r+2j-\frac{3}{2})(r-\frac{1}{2})\sqrt{N-r+\frac{1}{3}}}& r~odd\\
 \frac{\mathbf{A}(N+\frac{1}{2})U_r}{\prod_{j=1}^{r}(N-r+2j-\frac{3}{2})(r-\frac{1}{2})\sqrt{N-r+\frac{1}{3}}}& r~even.
 \end{array}\right.
 \end{equation} 
\end{theo}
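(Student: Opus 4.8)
The plan is to pass from the uniform error to a tail series of Legendre coefficients, insert the bound of Theorem~\ref{Th}, and then collapse the series by telescoping.

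First I would use the classical fact that $\|L_n\|_\infty=1$ on $[-1,1]$ (the maximum being attained at $x=\pm1$): from \eqref{exp},
\[
\|f(x)-f_N(x)\|_\infty=\Bigl\|\sum_{n=N+1}^\infty a_nL_n(x)\Bigr\|_\infty\le\sum_{n=N+1}^\infty|a_n|.
\]
Applying Theorem~\ref{Th} with the same $r$ and writing its conclusion as $|a_n|<\mathbf{A}\,U_r\,(n+\tfrac12)\,S_n^r\,(n-r+\tfrac13)^{-1/2}$ in the notation of Lemmas~\ref{le}--\ref{l3}, everything reduces to a sharp estimate of the purely numerical series
\[
W_N^r:=\sum_{n=N+1}^\infty\frac{(n+\tfrac12)\,S_n^r}{\sqrt{\,n-r+\tfrac13\,}}.
\]

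For the telescoping step the key is the identity behind Lemma~\ref{le}: since $n+\tfrac12=\tfrac12\bigl[(n-r+\tfrac12)+(n+r+\tfrac12)\bigr]$ and $n-r+\tfrac12$, $n+r+\tfrac12$ are the two extreme factors of $\prod_{j=1}^{r+1}(n-r+2j-\tfrac32)$, one obtains
\[
(n+\tfrac12)S_n^r=\tfrac12\Bigl(\tfrac1{Q_r(n)}+\tfrac1{Q_r(n+2)}\Bigr),\qquad Q_r(m):=\prod_{j=1}^{r}\bigl(m-r+2j-\tfrac32\bigr),
\]
and, applying the same product–telescoping mechanism once more (as in the proofs of Lemmas~\ref{le} and~\ref{l3}), $\tfrac1{Q_r(n)}+\tfrac1{Q_r(n+2)}=\tfrac1{2(r-1)}\bigl[\tfrac1{Q_{r-1}(n-1)}-\tfrac1{Q_{r-1}(n+3)}\bigr]$ for $r\ge2$. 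Substituting this into $W_N^r$ presents the summand as a telescoping difference modulated by the slowly varying weight $(n-r+\tfrac13)^{-1/2}$, and the remaining task is to dominate $W_N^r$ by a collapsing series $\sum_{n\ge N+1}\bigl(g_r(n)-g_r(n+1)\bigr)=g_r(N+1)$ whose first term is exactly the right-hand side of \eqref{Co}; equivalently, to verify the one-line inequality $g_r(n)-g_r(n+1)\ge(n+\tfrac12)S_n^r/\sqrt{n-r+\tfrac13}$ for all admissible $n$. I would prove this by induction on $r$, handling $r=1$ separately — there the square-root weight is indispensable, since $\sum(n+\tfrac12)S_n^1$ already diverges, which is why \eqref{Co} lists that case on its own — and taking $r=2$ as the base of the induction for $r\ge2$, using $Q_r(m)=(m-r+\tfrac12)\,Q_{r-1}(m+1)$ to descend in $r$, splitting the telescoped sum by the parity of $n$ to align the weights with those attached to $Q_{r-1}$, and controlling the ratios of square roots by elementary estimates such as $\sqrt{1+t}\le1+\tfrac t2$. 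The parity of $r$ surfaces here: it decides whether the boundary contribution that survives the collapse is weighed against $N+\tfrac12$ or $N+\tfrac32$, which is the origin of the two cases of \eqref{Co}.

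The hard part is exactly this final estimate, because there is essentially no slack. The summand and every natural telescoping candidate $g_r(n)-g_r(n+1)$ are both of exact order $n^{-r-1/2}$, and even the constants must be matched: one cannot pull the weight $(n-r+\tfrac13)^{-1/2}$ outside the sum, nor replace $n+\tfrac12$ by the larger factor $n+r+\tfrac12$, nor bound the handful of surviving boundary terms by the largest — each such shortcut already fails at $r=2$, where it costs a factor comparable to $\tfrac{r-1/2}{r-1}>1$. Thus both the square-root weight and the exact numerator $n+\tfrac12$ have to be carried through the induction, and the residual elementary inequality must be checked uniformly for all $n\ge N+1\ge r+2$; that bookkeeping is where the real work lies.
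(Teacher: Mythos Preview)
Your algebraic identities are correct --- in particular $(n+\tfrac12)S_n^r=\tfrac12\bigl(Q_r(n)^{-1}+Q_r(n+2)^{-1}\bigr)$ and the further reduction to $Q_{r-1}$ both hold --- and a telescoping-plus-induction program along these lines could presumably be pushed through. But the paper takes a much shorter and more elementary route, and your verdict that ``each such shortcut already fails'' is too pessimistic.

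The paper does not telescope at all. After reaching $\sum_{n>N}|a_n|$ and inserting \eqref{HCo}, it observes that for even $r$ the factor $n+\tfrac12$ actually occurs in the product $\prod_{j=1}^{r+1}(n-r+2j-\tfrac32)$ and cancels exactly, while for odd $r$ the adjacent factor $n+\tfrac32$ occurs and one uses $(n+\tfrac12)/(n+\tfrac32)\le1$. This is the source of the $N+\tfrac12$ versus $N+\tfrac32$ dichotomy in \eqref{Co}, not any parity effect in a telescoped boundary term. With $(n+\tfrac12)$ gone, every remaining linear factor and the square-root weight are written as $(n+r+\tfrac12)$ times a correction of the form $\bigl(1-c/(n+r+\tfrac12)\bigr)$; these corrections are monotone in $n$, so they are pulled out at the first index and the residual sum $\sum_{n>N}(n+r+\tfrac12)^{-(r+1/2)}$ is bounded by the integral $\int_N^\infty(x+r+\tfrac12)^{-(r+1/2)}\,dx=\bigl[(r-\tfrac12)(N+r+\tfrac12)^{r-1/2}\bigr]^{-1}$. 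Reassembling the pulled-out corrections reconstructs $\prod_{j=1}^{r}(N-r+2j-\tfrac32)$ and $\sqrt{N-r+\tfrac13}$ exactly.

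So the shortcuts you ruled out do work, provided one cancels $(n+\tfrac12)$ against the \emph{adjacent} factor rather than the extreme one, and pulls out only the bounded correction factors rather than the whole weight. Your scheme trades this short computation for a dominating inequality $g_r(n)-g_r(n+1)\ge(n+\tfrac12)S_n^r/\sqrt{n-r+\tfrac13}$ that you yourself flag as delicate and leave unverified; the paper avoids that difficulty entirely.
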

\begin{proof}
Applying the inequality $ \vert L_n(x)\vert\leq1 $ for all $ x\in\left[-1,1\right] $, we have
\begin{equation}\label{Sum}
\Vert f(x)-f_N(x)\Vert_{\infty}=\vert\sum_{n=N+1}^{\infty}a_nL_n(x)\vert\leq\sum_{n=N+1}^{\infty}\vert a_n\vert \vert L_n(x)\vert\leq\sum_{n=N+1}^{\infty}\vert a_n\vert.
\end{equation}
By using (\ref{HCo}) for $ r=1 $, we obtain
\begin{align}
&\Vert f(x)-f_N(x)\Vert_{\infty}\leq\sum_{n=N+1}^{\infty}\frac{\mathbf{A}(n+\frac{1}{2})U_1}{(n-\frac{1}{2})(n+\frac{3}{2})\sqrt{n-\frac{2}{3}}}\nonumber\\
&\leq\sum_{n=N+1}^{\infty}\frac{\mathbf{A}U_1}{(n-\frac{1}{2})\sqrt{n-\frac{2}{3}}}\leq\sum_{n=N+1}^{\infty}\frac{\mathbf{A}U_1}{(n-\frac{1}{2})^{\frac{3}{2}}\sqrt{1-\frac{1}{6(n-\frac{1}{2})}}}\nonumber\\
&\leq\frac{\mathbf{A}U_1}{\sqrt{1-\frac{1}{6(N-\frac{1}{2})}}}\sum_{n=N+1}^{\infty}\frac{1}{(n-\frac{1}{2})^{\frac{3}{2}}}\leq\frac{\mathbf{A}U_1}{\sqrt{1-\frac{1}{6(N-\frac{1}{2})}}}\int_{n=N}^{\infty}\frac{1}{(x-\frac{1}{2})^{\frac{3}{2}}}dx\nonumber\\
&=\frac{2\mathbf{A}U_1}{\sqrt{N-\frac{2}{3}}}
\end{align}
By using (\ref{HCo}) for odd $ r\geq3 $, we get
\begin{align}
&\Vert f(x)-f_N(x)\Vert_{\infty}\leq\sum_{n=N+1}^{\infty}\frac{\mathbf{A}(n+\frac{1}{2})U_r}{(n-r+\frac{1}{2})\cdots(n-\frac{1}{2})(n+\frac{3}{2})\cdots(n+r+\frac{1}{2})\sqrt{n-r+\frac{1}{3}}}\nonumber\\
&\leq\sum_{n=N+1}^{\infty}\frac{\mathbf{A}U_r}{(n-r+\frac{1}{2})\cdots(n-\frac{1}{2})(n+\frac{7}{2})\cdots(n+r+\frac{1}{2})\sqrt{n-r+\frac{1}{3}}}\nonumber\\
&=\mathbf{A}U_r\sum_{n=N+1}^{\infty}\frac{1}{(n+r+\frac{1}{2})^{r+\frac{1}{2}}(1-\frac{2r}{(n+r+\frac{1}{2})})\cdots(1-\frac{r+1}{(n+r+\frac{1}{2})})(1-\frac{r-3}{(n+r+\frac{1}{2})})\cdots(1-\frac{2}{(n+r+\frac{1}{2})})\sqrt{1-\frac{2r+\frac{1}{6}}{(n+r+\frac{1}{2})}}}\nonumber\\
&\leq\frac{\mathbf{A}U_r}{(1-\frac{2r}{(N+r+\frac{1}{2})})\cdots(1-\frac{r+1}{(N+r+\frac{1}{2})})(1-\frac{r-3}{(N+r+\frac{1}{2})})\cdots(1-\frac{2}{(N+r+\frac{1}{2})})\sqrt{1-\frac{2r+\frac{1}{6}}{(N+r+\frac{1}{2})}}}\sum_{n=N+1}^{\infty}\frac{1}{(n+r+\frac{1}{2})^{r+\frac{1}{2}}}\nonumber\\
&\leq\frac{\mathbf{A}U_r}{(1-\frac{2r}{(N+r+\frac{1}{2})})\cdots(1-\frac{r+1}{(N+r+\frac{1}{2})})(1-\frac{r-3}{(N+r+\frac{1}{2})})\cdots(1-\frac{2}{(N+r+\frac{1}{2})})\sqrt{1-\frac{2r+\frac{1}{6}}{(N+r+\frac{1}{2})}}}\int_{N}^{\infty}\frac{1}{(x+r+\frac{1}{2})^{r+\frac{1}{2}}}dx\nonumber\\
&=\frac{\mathbf{A}U_r}{(N-r+\frac{1}{2})\cdots(N-\frac{1}{2})(N+\frac{7}{2})\cdots(N+r-\frac{3}{2})(r-\frac{1}{2})\sqrt{N-r+\frac{1}{3}}}\nonumber\\
&=\frac{\mathbf{A}(N+\frac{3}{2})U_r}{\prod_{j=1}^{r}(N-r+2j-\frac{3}{2})(r-\frac{1}{2})\sqrt{N-r+\frac{1}{3}}}.\nonumber
\end{align} 
By a similar proof for even $ r $, we obtain
\begin{align*}
&\Vert f(x)-f_N(x)\Vert_{\infty}\leq\frac{\mathbf{A}(N+\frac{1}{2})U_r}{\prod_{j=1}^{r}(N-r+2j-\frac{3}{2})(r-\frac{1}{2})\sqrt{N-r+\frac{1}{3}}}.
\end{align*}
This complete the proof. 
\end{proof}
\section{New results on the Legendre coefficients}
In \citep{Xi20} the authors obtained an optimal upper bound for the Legendre coefficients without the assumptions in (\ref{VR1}) and (\ref{VR}) as follow 
\begin{equation}
\vert a_n\vert\leq \frac{U_r(n+\frac{1}{2})\Gamma(\frac{n-r}{2})}{2^r\sqrt{\pi}(n+r+1)\Gamma(\frac{n+r+1}{2})},
	\label{Liu}
\end{equation}
where, $ U_r=\int_{-1}^{1}|f^{(r+1)}(x)|dx $. This upper bound is really sharp and the authors provided this bound using the Rodrigues' formula. 

In Lemma \eqref{Key}, using the Sturm-Liouville problem, we presented a new relation between derivatives of Legendre polynomials. Now, using this lemma, we want to provide a new upper bound for the Legendre coefficients and we will show that this new upper bound match the ones obtained in  \citep{Xi20}. First, we state a lemma which provides an upper bound for Gegenbauer polynomials.
\begin{lemma}\label{Dur}\citep{Durand75}
Let $ \lambda\geq1. $ Then 
\begin{equation}
(1-x^2)^{\lambda-\frac{1}{2}}\left\vert C_{n}^{(\lambda)}(x)\right\vert\leq \frac{\Gamma(\frac{n}{2}+\lambda)}{\Gamma(\lambda)\Gamma(\frac{n}{2}+1)},\quad -1\leq x\leq1,
\end{equation}
where $ C_{n}^{(\lambda)}(x) $ is the Gegenbauer polynomial of degree $ n $. 
\end{lemma}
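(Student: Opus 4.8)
The plan is to obtain the inequality from a monotonicity property of a Sonine--Szeg\H{o}-type energy, after first peeling the weight off the polynomial. Set
\[
g(x):=(1-x^{2})^{\lambda-\frac12}\,C_{n}^{(\lambda)}(x);
\]
because $\lambda\ge1$, $g$ is smooth on $(-1,1)$, extends continuously to $[-1,1]$ with $g(\pm1)=0$, and the assertion is exactly that $|g(x)|\le \dfrac{\Gamma(\frac n2+\lambda)}{\Gamma(\lambda)\,\Gamma(\frac n2+1)}$ on $[-1,1]$. Starting from the Gegenbauer differential equation $(1-x^{2})y''-(2\lambda+1)xy'+n(n+2\lambda)y=0$ satisfied by $y=C_{n}^{(\lambda)}$ and substituting $y=(1-x^{2})^{\frac12-\lambda}g$, a routine computation gives that $g$ solves
\[
(1-x^{2})g''+(2\lambda-3)\,xg'+(n+1)(n+2\lambda-1)\,g=0,
\]
equivalently $\left(p(x)g'\right)'+q(x)g=0$ in self-adjoint form with $p(x)=(1-x^{2})^{\frac32-\lambda}$ and $q(x)=(n+1)(n+2\lambda-1)(1-x^{2})^{\frac12-\lambda}$, both positive on $(-1,1)$.

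The next step is to introduce the energy functional
\[
F(x):=g(x)^{2}+\frac{p(x)}{q(x)}\,(g')^{2}=g(x)^{2}+\frac{1-x^{2}}{(n+1)(n+2\lambda-1)}\,(g')^{2},
\]
which is finite on $(-1,1)$ and satisfies $F\ge g^{2}$. Differentiating and using $\left(pg'\right)'=-qg$ yields the standard identity $F'=-\dfrac{(pq)'}{q^{2}}\,(g')^{2}$. Since $pq=(n+1)(n+2\lambda-1)(1-x^{2})^{2-2\lambda}$, one finds $(pq)'=4(\lambda-1)(n+1)(n+2\lambda-1)\,x\,(1-x^{2})^{1-2\lambda}$, which for $\lambda\ge1$ has the sign of $x$; hence $F'$ has the sign of $-x$, so $F$ is non-decreasing on $(-1,0]$ and non-increasing on $[0,1)$. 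Consequently $g(x)^{2}\le F(x)\le F(0)$ for all $x\in(-1,1)$, and it remains to prove that $F(0)\le\left(\dfrac{\Gamma(\frac n2+\lambda)}{\Gamma(\lambda)\Gamma(\frac n2+1)}\right)^{2}$.

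Now $F(0)=C_{n}^{(\lambda)}(0)^{2}+\dfrac{\big(C_{n}^{(\lambda)}\big)'(0)^{2}}{(n+1)(n+2\lambda-1)}$, and I would finish by splitting on the parity of $n$, using $C_{2m}^{(\lambda)}(0)=\dfrac{(-1)^{m}\Gamma(\lambda+m)}{\Gamma(\lambda)\,m!}$ and $\big(C_{n}^{(\lambda)}\big)'=2\lambda\,C_{n-1}^{(\lambda+1)}$. If $n=2m$, then $\big(C_{n}^{(\lambda)}\big)'(0)=0$, so $F(0)=C_{n}^{(\lambda)}(0)^{2}=\left(\dfrac{\Gamma(\frac n2+\lambda)}{\Gamma(\lambda)\Gamma(\frac n2+1)}\right)^{2}$ with equality, which disposes of the even case and shows the bound is sharp. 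If $n=2m+1$, then $C_{n}^{(\lambda)}(0)=0$ and a short computation gives $F(0)=\dfrac{\Gamma(\lambda+m+1)^{2}}{\Gamma(\lambda)^{2}\,(m!)^{2}\,(m+1)(m+\lambda)}$; using $\Gamma(z+1)=z\Gamma(z)$, the required bound $F(0)\le\left(\dfrac{\Gamma(m+\lambda+\frac12)}{\Gamma(\lambda)\Gamma(m+\frac32)}\right)^{2}$ is equivalent to $\varphi(m+1)\le\varphi(m+\lambda)$, where $\varphi(t):=\dfrac{\Gamma(t+\frac12)}{\Gamma(t)\,\sqrt{t}}$. Since $m+1\le m+\lambda$, it suffices to check that $\varphi$ is non-decreasing on $[1,\infty)$: writing $\psi=\Gamma'/\Gamma$ for the digamma function, $\dfrac{d}{dt}\log\varphi(t)=\psi(t+\tfrac12)-\psi(t)-\tfrac1{2t}$, and concavity of $\psi$ on $(0,\infty)$ gives $\psi(t+\tfrac12)\ge\tfrac12\big(\psi(t)+\psi(t+1)\big)=\psi(t)+\tfrac1{2t}$, so $\log\varphi$ is non-decreasing.

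The two points that need genuine care are: the choice of energy — the functional built directly from $C_{n}^{(\lambda)}$ rather than from $g$ does \emph{not} have a derivative of constant sign, so it is essential to strip the factor $(1-x^{2})^{\lambda-\frac12}$ first — and the odd-$n$ case, where one is left with a Gamma-function inequality that is \emph{not} implied by the crude estimate $\Gamma(t+\frac12)/\Gamma(t)\le\sqrt{t}$ and genuinely uses the concavity of the digamma function, a Gautschi--Kershaw-type refinement. I expect the odd-$n$ estimate to be the main obstacle; the even case is sharp, and the whole statement collapses to $|\sin((n+1)\theta)|\le1$ with $x=\cos\theta$ when $\lambda=1$.
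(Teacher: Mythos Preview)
Your argument is correct. The paper does not prove this lemma at all: it is quoted verbatim from Durand~\cite{Durand75} and used as a black box, so there is no in-paper proof to compare against. What you have supplied is a complete, self-contained proof.

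Your route is the classical Sonine--Szeg\H{o} energy method, and each step checks out. The substitution $g=(1-x^{2})^{\lambda-1/2}C_{n}^{(\lambda)}$ does produce the equation $(1-x^{2})g''+(2\lambda-3)xg'+(n+1)(n+2\lambda-1)g=0$; the self-adjoint form has $p=(1-x^{2})^{3/2-\lambda}$, $q=(n+1)(n+2\lambda-1)(1-x^{2})^{1/2-\lambda}$, so $p/q=(1-x^{2})/[(n+1)(n+2\lambda-1)]$ and $pq=(n+1)(n+2\lambda-1)(1-x^{2})^{2-2\lambda}$; the identity $F'=-(pq)'(g')^{2}/q^{2}$ follows from $(pg')'=-qg$, and $(pq)'$ has the sign of $x$ exactly when $\lambda\ge1$, yielding the maximum of $F$ at $x=0$. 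The even case gives equality with the stated bound, showing sharpness. In the odd case your reduction to $\varphi(m+1)\le\varphi(m+\lambda)$ with $\varphi(t)=\Gamma(t+\tfrac12)/(\Gamma(t)\sqrt{t})$ is correct, and the monotonicity of $\varphi$ indeed follows from concavity of $\psi$ via $\psi(t+\tfrac12)\ge\tfrac12(\psi(t)+\psi(t+1))=\psi(t)+\tfrac{1}{2t}$; note that your computation actually gives $(\log\varphi)'\ge0$ on all of $(0,\infty)$, not merely on $[1,\infty)$, which is more than enough. Your caution about stripping the weight before forming the energy is well placed: the analogous functional built directly from $C_{n}^{(\lambda)}$ does not have monotone behaviour for $\lambda>1$.
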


It is well-known that $ L_n(x)=C_{n}^{(\frac{1}{2})}(x) $. From \citep[p. 992]{Gradshteyn1980}, we know that 
\begin{equation}\label{d1}
\frac{d}{dx}C_{n}^{(\lambda)}(x)=2\lambda C_{n-1}^{(\lambda+1)}(x).
\end{equation}
So, we can see that 
\begin{equation}\label{d1}
\frac{d}{dx}L_{n}(x)=\frac{d}{dx}C_{n}^{(\frac{1}{2})}(x)= C_{n-1}^{(\frac{3}{2})}(x).
\end{equation}
From \eqref{sim}, Lemma \eqref{Dur} and \eqref{d1} we obtain
\begin{equation}\label{eqb2}
\vert \mathcal{L}_n(x)\vert\leq \frac{\Gamma(\frac{n+2}{2})}{n(n+1)\Gamma(\frac{3}{2})\Gamma(\frac{n+1}{2})}=\frac{\Gamma(\frac{n}{2})}{\sqrt{\pi}(n+1)\Gamma(\frac{n+1}{2})},\quad -1\leq x\leq1.
\end{equation}

This upper bound is sharp and interesting, because by using  \eqref{eqb2}, we obtain the same upper bound for Legendre coefficients as in  \citep{Xi20}.
Now, we present the following lemma. 
\begin{lemma}\label{cor}
Under the assumptions of Lemma \eqref{Key}, and using the inequlity \eqref{eqb2}, the following relation holds:
\begin{equation*}\label{sharp}
\left(\frac{\Gamma(\frac{n-r}{2})}{\sqrt{\pi}\lambda^1_{n,r}(n-r+1)\Gamma(\frac{n-r+1}{2})}+ \cdots 
+\frac{\Gamma(\frac{n+r}{2})}{\sqrt{\pi}\lambda^{2^r}_{n,r}(n+r+1)\Gamma(\frac{n+r+1}{2})}\right)=\frac{\Gamma(\frac{n-r}{2})}{2^r\sqrt{\pi}(n+r+1)\Gamma(\frac{n+r+1}{2})},
\end{equation*}
where, $ \lambda^{1}_{n,0}=1 $ and for $ r\geq1 $, $ \lambda^{j}_{n,r}, j=1,\ldots,2^r $ are defined in Lemma \eqref{Key}.
\end{lemma}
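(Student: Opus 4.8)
The plan is to reproduce, for this weighted sum, the same two-step mechanism that takes one from Lemma~\ref{le} to Lemma~\ref{l3}. Write $T_n^r$ for the left-hand side of the asserted identity. By construction $T_n^r$ is precisely what one gets by bounding each of the $2^r$ summands in the representation \eqref{Ind4} of $\mathcal{L}_n$ via \eqref{eqb2}, and the claimed right-hand side is exactly the Gegenbauer-type bound \eqref{Liu} stripped of the factor $U_r(n+\tfrac12)$. The base case $r=0$ is immediate: the convention $\lambda^1_{n,0}=1$ makes the single summand $\dfrac{\Gamma(n/2)}{\sqrt{\pi}\,(n+1)\,\Gamma((n+1)/2)}$, which is the right-hand side at $r=0$.

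First I would establish the three-term recursion
\[
T_n^r=\frac{1}{2n+1}\left(T_{n-1}^{r-1}+T_{n+1}^{r-1}\right),\qquad r\ge 1 ,
\]
the exact analogue of \eqref{baz}. This is read off from the inductive construction in the proof of Lemma~\ref{Key}: applying \eqref{LLL} to every numerator of the level-$(r-1)$ representation of $\mathcal{L}_n$ shows that the $2^r$ terms of the level-$r$ representation split into two blocks of $2^{r-1}$ terms, the first block being $\tfrac{1}{2n+1}$ times the level-$(r-1)$ representation of $\mathcal{L}_{n-1}$ and the second being $\tfrac{1}{2n+1}$ times that of $\mathcal{L}_{n+1}$; in particular $\lambda^j_{n,r}=(2n+1)\lambda^j_{n-1,r-1}$ for $1\le j\le 2^{r-1}$ and $\lambda^{2^{r-1}+j}_{n,r}=(2n+1)\lambda^j_{n+1,r-1}$, while the subscript of each $\mathcal{L}$, hence the argument of the bounding quantity in \eqref{eqb2}, is shifted from $m$ to $m\mp1$ in the expected way. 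Bounding termwise by \eqref{eqb2} then yields the displayed recursion, exactly as \eqref{baz} was obtained in Lemma~\ref{le}.

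With the recursion in hand I would close by induction on $r$. Assuming $T_m^{r-1}=\dfrac{\Gamma(\frac{m-r+1}{2})}{2^{r-1}\sqrt{\pi}\,(m+r)\,\Gamma(\frac{m+r}{2})}$, substitute $m=n\mp1$, use $\Gamma(\tfrac{n-r+2}{2})=\tfrac{n-r}{2}\Gamma(\tfrac{n-r}{2})$ and $\Gamma(\tfrac{n+r+1}{2})=\tfrac{n+r-1}{2}\Gamma(\tfrac{n+r-1}{2})$ to bring the two fractions over the common denominator $2^{r-1}\sqrt{\pi}\,(n+r-1)(n+r+1)\Gamma(\tfrac{n+r-1}{2})$, and note that the numerators add to $\bigl[(n+r+1)+(n-r)\bigr]\Gamma(\tfrac{n-r}{2})=(2n+1)\Gamma(\tfrac{n-r}{2})$. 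The factor $2n+1$ cancels the one in the recursion, leaving $T_n^r=\dfrac{\Gamma(\frac{n-r}{2})}{2^{r-1}\sqrt{\pi}\,(n+r-1)(n+r+1)\,\Gamma(\frac{n+r-1}{2})}$, which is the claimed right-hand side once one rewrites $2^{r}(n+r+1)\Gamma(\tfrac{n+r+1}{2})=2^{r-1}(n+r-1)(n+r+1)\Gamma(\tfrac{n+r-1}{2})$.

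The only genuinely delicate point is the recursion: one must check that the two-block decomposition of the $\lambda^j_{n,r}$ together with the $m\mapsto m\mp1$ shift of the subscripts holds for \emph{all} of the internal indices $j$, not merely for the boundary indices that are displayed explicitly in the proofs of Lemmas~\ref{Key} and \ref{le}. This is a routine bookkeeping induction running in parallel with the one already carried out for Lemma~\ref{Key}; once it is in place, everything else is the elementary $\Gamma$-recurrence computation of the preceding paragraph. Equivalently, one may induct on $r$ directly in the identity, pairing the two level-$r$ descendants of each level-$(r-1)$ term — the computation is the same.
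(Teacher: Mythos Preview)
Your proposal is correct, and your inductive computation with the $\Gamma$-recurrence checks out cleanly: with $T_m^{r-1}=\dfrac{\Gamma(\frac{m-r+1}{2})}{2^{r-1}\sqrt{\pi}\,(m+r)\,\Gamma(\frac{m+r}{2})}$ one indeed gets
\[
T_{n-1}^{r-1}+T_{n+1}^{r-1}
=\frac{(2n+1)\,\Gamma(\frac{n-r}{2})}{2^{r-1}\sqrt{\pi}\,(n+r-1)(n+r+1)\,\Gamma(\frac{n+r-1}{2})},
\]
and dividing by $2n+1$ and using $\Gamma(\tfrac{n+r+1}{2})=\tfrac{n+r-1}{2}\,\Gamma(\tfrac{n+r-1}{2})$ gives the claimed closed form.

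The paper, however, does not go through the three-term recursion $T_n^r=\tfrac{1}{2n+1}\bigl(T_{n-1}^{r-1}+T_{n+1}^{r-1}\bigr)$ at all. Its proof simply writes out the cases $r=0,1,2$ explicitly: for each it pairs the two level-$r$ descendants of each level-$(r-1)$ term and collapses the pair using $\Gamma(t+1)=t\Gamma(t)$, then states that ``continuing the process for $r\ge 3$'' gives the result. This is exactly the alternative you mention in your last sentence, carried out by hand for small $r$ and then left as a pattern-continuation. Your route via the recursion is the genuine analogue of the passage from Lemma~\ref{le} to Lemma~\ref{l3}; it is more systematic and yields an honest induction rather than an ellipsis, at the cost of having to pin down the block decomposition $\lambda^j_{n,r}=(2n+1)\lambda^j_{n-1,r-1}$, $\lambda^{2^{r-1}+j}_{n,r}=(2n+1)\lambda^j_{n+1,r-1}$ that the paper leaves implicit in the ``$\cdots$'' of Lemmas~\ref{Key} and~\ref{le}. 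Either organization proves the lemma; yours is the more complete write-up.
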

\begin{proof}
The equality is evident for $ r=0 $. In the case that $ r=1 $, we get
\begin{align*}
&\frac{\Gamma(\frac{n-1}{2})}{\sqrt{\pi}\lambda^{1}_{n,1}n\Gamma(\frac{n}{2})}+\frac{\Gamma(\frac{n+1}{2})}{\sqrt{\pi}\lambda^{2}_{n,1}(n+2)\Gamma(\frac{n+2}{2})}
=\frac{\Gamma(\frac{n-1}{2})}{2\sqrt{\pi}\lambda^{1}_{n,1}\Gamma(\frac{n+2}{2})}+\frac{(n-1)\Gamma(\frac{n-1}{2})}{2\sqrt{\pi}\lambda^{2}_{n,1}(n+2)\Gamma(\frac{n+2}{2})}\nonumber\\
&=\frac{\Gamma(\frac{n-1}{2})}{2\sqrt{\pi}\Gamma(\frac{n+2}{2})}\left(\frac{1}{\lambda^{1}_{n,1}}+\frac{(n-1)}{\lambda^{2}_{n,1}(n+2)}\right)=\frac{\Gamma(\frac{n-1}{2})}{2\sqrt{\pi}(n+2)\Gamma(\frac{n+2}{2})},
\end{align*}
where we used $ \Gamma(t+1)=t\Gamma(t) $. Also, in the case that $ r=2 $, the following equality is obtained.
\begin{align*}
&\frac{\Gamma(\frac{n}{2}-1)}{\sqrt{\pi}\lambda^{1}_{n,2}(n-1)\Gamma(\frac{n}{2}-\frac{1}{2})}+\frac{\Gamma(\frac{n}{2})}{\sqrt{\pi}\lambda^{2}_{n,2}(n+1)\Gamma(\frac{n}{2}+\frac{1}{2})}
+\frac{\Gamma(\frac{n}{2})}{\sqrt{\pi}\lambda^{3}_{n,2}(n+1)\Gamma(\frac{n}{2}+\frac{1}{2})}+\frac{\Gamma(\frac{n}{2}+\frac{1}{2})}{\sqrt{\pi}\lambda^{4}_{n,2}(n+3)\Gamma(\frac{n}{2}+\frac{3}{2})}\\
&=\underbrace{\frac{\Gamma(\frac{n}{2}-1)}{2\sqrt{\pi}\lambda^{1}_{n,2}\Gamma(\frac{n}{2}+\frac{1}{2})}+\frac{(n-2)\Gamma(\frac{n}{2}-1)}{2\sqrt{\pi}\lambda^{2}_{n,2}(n+1)\Gamma(\frac{n}{2}+\frac{1}{2})}}
+\underbrace{\frac{\Gamma(\frac{n}{2})}{2\sqrt{\pi}\lambda^{3}_{n,2}\Gamma(\frac{n}{2}+\frac{3}{2})}+\frac{(n-1)\Gamma(\frac{n}{2}-\frac{1}{2})}{2\sqrt{\pi}\lambda^{4}_{n,2}(n+3)\Gamma(\frac{n}{2}+\frac{3}{2})}}\\
&=\frac{\Gamma(\frac{n}{2}-1)(2n-1)}{2\sqrt{\pi}\lambda^{1}_{n,2}(n+1)\Gamma(\frac{n}{2}+\frac{1}{2})}+\frac{\Gamma(\frac{n}{2})(2n+3)}{2\sqrt{\pi}\lambda^{3}_{n,2}(n+3)\Gamma(\frac{n}{2}+\frac{3}{2})}\\
&=\frac{\Gamma(\frac{n}{2}-1)}{4\sqrt{\pi}(2n+1)\Gamma(\frac{n}{2}+\frac{3}{2})}+\frac{(n-2)\Gamma(\frac{n}{2}-1)}{4\sqrt{\pi}(2n+1)(n+3)\Gamma(\frac{n}{2}+\frac{3}{2})}\nonumber\\
&=\frac{\Gamma(\frac{n-2}{2})}{4\sqrt{\pi}(n+3)\Gamma(\frac{n+3}{2})}.
\end{align*}
Continuing the process for  $ r\geq3 $ show that the result holds.
\end{proof}

Applying \eqref{eqb2} in the Lemma \eqref{Key} and by similar proof of Theorem \eqref{Th}, we obtain the following theorem.

\begin{theo}\label{Thj}
Under the assumptions of Theorem \eqref{Th}, the upper bound for the Legendre coefficients of the function $ f $ for $ n\geq r $ is as follows:
\begin{equation*} 
\left\vert a_{n}\right\vert\leq\frac{U_r(n+\frac{1}{2})\Gamma(\frac{n-r}{2})}{2^r\sqrt{\pi}(n+r+1)\Gamma(\frac{n+r+1}{2})}.
\end{equation*}
\end{theo}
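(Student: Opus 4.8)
The plan is to mirror the proof of Theorem~\ref{Th} almost verbatim, with two substitutions: the pointwise estimate \eqref{eqb} for $\mathcal{L}_n$ is replaced by the sharper Gegenbauer estimate \eqref{eqb2}, and the telescoping sum $S_n^r$ is replaced by the Gamma-function sum evaluated in Lemma~\ref{cor}. First I would dispose of the trivial cases. For $r=0$, equation \eqref{tas} gives $a_n=(n+\frac{1}{2})\int_{-1}^{1}f'(x)\mathcal{L}_n(x)\,dx$, so $|a_n|\le (n+\frac{1}{2})U_0\max_{[-1,1]}|\mathcal{L}_n|$, and \eqref{eqb2} yields exactly the claimed bound with $r=0$. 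For $r\ge 1$ and $n=r$ the right-hand side is $+\infty$ because $\Gamma(\frac{n-r}{2})=\Gamma(0)$, so there is nothing to prove; hence we may assume $r\ge1$ and $n\ge r+1$, which is the range $1\le r\le n-1$ in which Lemma~\ref{Key} is valid.

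Next, starting from \eqref{tas} I would substitute the representation \eqref{Ind4} of $\mathcal{L}_n$ and integrate by parts $r$ times. At each step the boundary term vanishes since $\mathcal{L}_{n+j}(\pm1)=0$ for $-r\le j\le r$, and the intermediate antiderivatives are legitimate because $f,f',\dots,f^{(r-1)}$ are absolutely continuous; exactly as in the proof of Theorem~\ref{Th} this gives
\begin{align*}
a_n=(n+\tfrac{1}{2})\int_{-1}^{1}f^{(r+1)}(x)&\left(\frac{(-1)^r\mathcal{L}_{n-r}(x)}{\lambda^1_{n,r}}+\frac{(-1)^{r-1}\mathcal{L}_{n-r+2}(x)}{\lambda^2_{n,r}}+\cdots\right.\\
&\left.-\frac{\mathcal{L}_{n+r-2}(x)}{\lambda^{2^{r-1}}_{n,r}}+\frac{\mathcal{L}_{n+r}(x)}{\lambda^{2^{r}}_{n,r}}\right)dx.
\end{align*}
Applying the triangle inequality, bounding each of the $2^r$ functions $\mathcal{L}_{n-r},\mathcal{L}_{n-r+2},\dots,\mathcal{L}_{n+r}$ by the $x$-independent estimate \eqref{eqb2}, and factoring out $U_r=\int_{-1}^{1}|f^{(r+1)}(x)|\,dx$, one obtains
\[
|a_n|\le(n+\tfrac{1}{2})U_r\left(\frac{\Gamma(\frac{n-r}{2})}{\sqrt{\pi}\,\lambda^1_{n,r}(n-r+1)\Gamma(\frac{n-r+1}{2})}+\cdots+\frac{\Gamma(\frac{n+r}{2})}{\sqrt{\pi}\,\lambda^{2^{r}}_{n,r}(n+r+1)\Gamma(\frac{n+r+1}{2})}\right),
\]
and the parenthesized expression is exactly the left-hand side of the identity in Lemma~\ref{cor}.

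Finally, invoking Lemma~\ref{cor} collapses that sum to $\frac{\Gamma(\frac{n-r}{2})}{2^r\sqrt{\pi}(n+r+1)\Gamma(\frac{n+r+1}{2})}$, and therefore
\[
|a_n|\le\frac{U_r(n+\frac{1}{2})\Gamma(\frac{n-r}{2})}{2^r\sqrt{\pi}(n+r+1)\Gamma(\frac{n+r+1}{2})},
\]
which is the assertion. I do not expect a genuine obstacle here, since all the structural ingredients — the representation \eqref{Ind4}, the vanishing of the boundary terms, and the summation identity — are already in hand from Lemmas~\ref{Key} and \ref{cor}. The only thing requiring attention is bookkeeping: verifying that the signs, the shifted indices $n-r,n-r+2,\dots,n+r$, and the denominators $\lambda^j_{n,r}$ produced by the $r$ integrations by parts line up term by term with the left-hand side of Lemma~\ref{cor}. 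Because the signs disappear under the triangle inequality and the index pattern is identical to the one in Theorem~\ref{Th} (only $S_n^r$ is traded for the Gamma-function sum), this check is purely mechanical.
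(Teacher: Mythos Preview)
Your proposal is correct and follows exactly the approach the paper indicates: the paper's own ``proof'' is the single sentence ``Applying \eqref{eqb2} in the Lemma~\eqref{Key} and by similar proof of Theorem~\eqref{Th}, we obtain the following theorem,'' and you have simply carried out those steps in detail, replacing \eqref{eqb} by \eqref{eqb2} and the summation Lemma~\ref{l3} by Lemma~\ref{cor}. Your handling of the edge cases $r=0$ and $n=r$ is a welcome addition that the paper omits.
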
 

The upper bound obtained in Theorem \eqref{Thj} match the ones obtained in \citep{Xi20}.  
Note that a new upper bounds for $ \mathcal{L}_n(x) $ help us to obtain a new upper bound for Legendre coefficients and improve the result in Theorem  \eqref{Thj}. 

\section{Comparison Results}\label{sec3}
This section aims to draw a comparison between our proposed upper bounds and the upper bounds provided so far.
In \citep{Hai21}, the author presented the following upper bound for the Legendre coefficients without the assumptions in (\ref{VR1}) and (\ref{VR})
\begin{equation}\label{eqw}
\vert a_n\vert\leq\frac{2U_r\prod_{j=1}^{r}h_{n-j}}{\sqrt{2\pi(n-r)}},
\end{equation}
where, $ h_{n-j}=\frac{1}{n-j+\frac{1}{2}} $. The author showed that the approximation errors obtained in \citep{Hai21} is sharper than ones presented in \citep{Shu12,Hai18}.

\begin{remark}
Now, we compare our proposed upper bound (\ref{HCo}) with the upper bound (\ref{eqw}). 

Assume that 
\begin{equation*}
\gamma_{n,r}=\frac{\mathbf{A}(n+\frac{1}{2})}{\Pi_{j=1}^{r+1}(n-r+2j-\frac{3}{2})\sqrt{n-r+\frac{1}{3}}},\quad\quad \theta_{n,r}=\frac{2\prod_{j=1}^{r}h_{n-j}}{\sqrt{2\pi(n-r)}}.
\end{equation*}
The numerical results for the values of $ \gamma_{n,r} $ and $ \theta_{n,r} $ are listed in Table \ref{t1}. The numerical results obtained from this table indicate that in all cases, the value of $\gamma_{n,r}$ is smaller than thats of $ \theta_{n,r} $. In particular, for the case that $ r$ is close to $n$, the value of $\gamma_{n,r}$ is much smaller than $ \theta_{n,r} $. Therefore, it reveals from this remark that the proposed upper bound \eqref{HCo} is sharper than the upper bound \eqref{eqw}.
\end{remark}

\begin{table}[!htbp] 
\centering
\caption{Comparison between $ \gamma_{n,r} $ and $ \theta_{n,r} $.}
\scalebox{0.9}{\begin{tabular}{ c c c c }\hline
n& r & $ \gamma_{n,r} $ &$ \theta_{n,r} $ \\ \hline 
5&4 &0.0023& 0.0135\\ 
10& 2 &0.0033& 0.0035\\ 
10&7 &$ 9.96\times10^{-8} $&$ 1.35\times10^{-6} $ \\
20&5&$ 7.68\times10^{-8} $&$ 1.27\times10^{-7} $\\ 
20&14&$6.81 \times10^{-19} $&$ 1.73\times10^{-16} $\\
30&15&$ 3.09\times10^{-23} $&$ 1.43\times10^{-21} $\\ 
30&24&$ 3.69\times10^{-35} $&$ 1.94\times10^{-30} $\\ 
\end{tabular}}
\label{t1}
\end{table}

In the following, we compare the approximation error of the partial sums of Legendre polynomials evaluated by Theorem \ref{Th2} and the bound presented in \citep{Hai21}. The author proved that
\begin{equation}
\Vert f(x)-f_N(x)\Vert_{\infty}\leq \left\{
		\begin{array}{lr}
	\frac{4U_1}{\sqrt{2\pi(N-1)}}&r=1,\\
			\frac{2U_r}{\prod_{j=2}^{r}(N-j+\frac{3}{2})(r-1)\sqrt{2\pi(N-r+1)}}& r\geq2,
		\end{array}\right.
	\label{wang2}
\end{equation}
which is sharper than ones presented in \citep{Shu12,Hai18}.
\begin{example}\label{ex1}
Let $ j\geq2 $ and $ -1<t<1. $ Consider the function $ f_j(x)=\frac{1}{j!}(x-t)^{j-1}\vert x-t\vert$. This function and its derivatives are absolutely continuous on $ [-1,1] $ and $ f^{(j-1)}_j(x)=\vert x-t\vert $. Also,
$f_j^{(j)}(x)=2H(x-t)-1, $
where $ H(x) $ is the Heaviside step function which is of bounded variation and  
$
f_j^{(j+1)}(x)=2\delta(x-t),
$
where $ \delta(x-t) $ is the Dirac delta function. Then
\begin{equation*}
U_{j}=\int_{-1}^{1}\left\vert f_j^{(j+1)}(x)\right\vert dx=\int_{-1}^{1}2\delta(x-t)dx=2,\\
\end{equation*}
In Table \ref{T2} the comparison results between Theorem \ref{Th2} and \eqref{wang2} are listed  for some values of $ N, j $.  
\end{example}
\begin{table}[!htbp]
\centering
\caption{Comparison results between two approximation errors for the function $ f_j(x) $.}
\scalebox{0.65}{\begin{tabular}{c c c c ||c c c c c}\hline
$N$ & $j$& Inequality (\ref{wang2}) & Inequality (\ref{Co}) & $N$ & $j$ & Inequality (\ref{wang2}) & Inequality (\ref{Co}) \\ \hline
15&3&$ 1.394\times10^{-3} $&$ 9.868\times10^{-4} $&20& 3&$ 6.069\times10^{-4} $&$ 4.421\times10^{-4} $\\
15&10&$ 1.895\times10^{-10} $&$ 6.824\times10^{-12} $&20& 10&$ 2.352\times10^{-12} $&$ 2.126\times10^{-13} $\\ 
15&12&$ 1.321\times10^{-11} $&$ 7.989\times10^{-14} $&20&12 &$ 2.682\times10^{-14} $&$ 7.544\times10^{-16} $\\ 
\hline
\end{tabular}}
\label{T2}
\end{table} 

\section{Conclusion}
In this paper  a new relation between derivatives of Legendre polynomials is presented in Lemma  \ref{Key} and by using this key lemma and Lemmas \ref{le}, \ref{l3},  new upper bounds for the Legendre coefficients of differentiable functions are obtained (see Theorem \ref{Th}).  We compare these upper bounds by previous upper  bounds which are presented in \citep{ Hai21}. Moreover, we provide a new upper bound error on the approximation of a function $ f (x) $ by truncated Legendre polynomial series.

\end{document}